\documentclass{article}
\usepackage{fullpage}

\usepackage[utf8]{inputenc}
\usepackage[T1]{fontenc}
\usepackage{amsmath}
\usepackage{amsthm,amssymb}
\usepackage{nicefrac,amsfonts}
\usepackage{thmtools,mathtools}
\usepackage{caption,subcaption}
\usepackage{graphicx,color,tikz,tcolorbox}
\usepackage{enumerate,enumitem}
\usepackage[sort,nocompress]{cite}
\usepackage{xspace}
\usepackage{bbm}
\usepackage{hyperref}
\usepackage[capitalise,noabbrev]{cleveref}
\crefname{thm}{Theorem}{Theorems}
\crefname{lem}{Lemma}{Lemmas}
\crefname{cor}{Corollary}{Corollaries}
\crefname{prop}{Proposition}{Propositions}
\crefname{qu}{Question}{Questions}
\crefname{rem}{Remark}{Remarks}
\crefname{fig}{Figure}{Figures}
\usepackage{float}
\usepackage{booktabs}
\usepackage{tabularx}
\usepackage{array}
\hypersetup{
    colorlinks=true,
    linkcolor=blue,
    citecolor=magenta,
    filecolor=magenta,
    urlcolor=cyan,
    linktoc=all
}
\usepackage{titling}

\newlength{\bibitemsep}
\newlength{\bibparskip}
\let\oldthebibliography\thebibliography
\renewcommand\thebibliography[1]{%
  \oldthebibliography{#1}%
  \setlength{\parskip}{\bibitemsep}%
  \setlength{\itemsep}{\bibparskip}%
}

\makeatletter
\renewcommand{\paragraph}{%
  \@startsection{paragraph}{4}%
  {\z@}{1.6ex \@plus 1ex \@minus .2ex}{-0.5em}%
  {\normalfont\normalsize\bfseries}%
}
\makeatother

\theoremstyle{plain}
\newtheorem{thm}{Theorem}[section]

\newtheorem{cla}[thm]{Claim}
\newtheorem{cor}[thm]{Corollary}
\newtheorem{prop}[thm]{Proposition}
\newtheorem{qu}[thm]{Question}

\theoremstyle{definition}

\newcommand{\cI}{\mathcal{I}}

\def\final{1}
\def\iflong{\iffalse}
\ifnum\final=0
\newcommand{\kristof}[1]{{\color{red}[{\tiny \textbf{Kristóf:}  #1}]\marginpar{\color{red}*}}}
\newcommand{\bence}[1]{{\color{magenta}[{\tiny \textbf{Bence:}  #1}]\marginpar{\color{magenta}*}}}
\else
\newcommand{\kristof}[1]{}
\newcommand{\bence}[1]{}
\fi

\title{A Note on the List Chromatic Number of Two Matroids}

\thanksmarkseries{alph}
\author{
Bence Garami\thanks{Department of Computer Science and Information Theory, Faculty of Electrical Engineering and Informatics, Budapest University of Technology and Economics, Budapest, Hungary. Email: \texttt{garamibence@edu.bme.hu}.}
}
\date{}

\begin{document}
\maketitle
\thispagestyle{empty}

\begin{abstract}
We study list coloring of common independent sets of two matroids. We construct a graphic matroid $M_1$ and a partition matroid $M_2$ with common chromatic number two and common list chromatic number three, showing that the two parameters need not be equal. This resolves a question raised by Kir\'aly, later stated as a conjecture by Aharoni, Berger, Guo, and Kotlar. We also show that if two strongly base-orderable matroids are each $2$-colorable, then their intersection is $2$-list-colorable.

\medskip
\noindent\textbf{Keywords:} Matroid coloring, List coloring, Matroid intersection, Reductions to partition matroids, Strongly base-orderable matroids
\end{abstract}

\section{Introduction}
\label{sec:intro}

Let $M=(E,\cI)$ be a loopless matroid. A \textit{coloring} of $M$ is a map $c\colon E\to C$ such that every color class $c^{-1}(\gamma)$ is independent in $M$. The \textit{chromatic number} $\chi(M)$ is the minimum number of colors in such a coloring, or equivalently the minimum number of independent sets whose union is $E$. By the matroid partition theorem of Edmonds~\cite{edmonds1965minimum}, an optimal coloring can be found in polynomial time in the independence-oracle model; see also~\cite[Chapter~42]{schrijver2003combinatorial}.

Now let $M_1=(E,\cI_1)$ and $M_2=(E,\cI_2)$ be two matroids on the same ground set. A coloring of $(M_1,M_2)$ is a coloring of $E$ in which every color class is independent in both matroids. We call the minimum number of colors in such a coloring the \textit{common chromatic number} and denote it by $\chi(M_1,M_2)$. In contrast to the single-matroid case, even deciding whether two colors suffice is hard in general; see~\cite{berczi2021complexity}.

List coloring asks whether a coloring can be chosen when the available colors depend on the element. A \textit{list assignment} is a family $L=\{L(e)\colon e\in E\}$ of finite sets of colors. An \textit{$L$-coloring} of a matroid is a coloring $c$ satisfying $c(e)\in L(e)$ for every $e\in E$. The \textit{list chromatic number} $\chi_\ell(M)$ is the least integer $k$ such that an $L$-coloring exists for every list assignment with $|L(e)|=k$ for all $e$. Seymour proved that $\chi_\ell(M)=\chi(M)$ for every loopless matroid~\cite{seymour1998note}.

The same definition applies to a pair of matroids: an $L$-coloring of $(M_1,M_2)$ assigns to each element a color from its list so that every monochromatic set is independent in both matroids. We call the corresponding parameter the \textit{common list chromatic number} and denote it by $\chi_\ell(M_1,M_2)$. Galvin's theorem on the list chromatic index of bipartite multigraphs gives an important special case~\cite{galvin1995list}. Indeed, the matchings of a bipartite multigraph are precisely the common independent sets of the two unit-capacity partition matroids defined by its two vertex classes. Hence $\chi_\ell(M_1,M_2)=\chi(M_1,M_2)$ whenever both matroids are unit-capacity partition matroids.

Kir\'aly and Pap studied the corresponding \textit{joint list-coloring property}~\cite{kiraly2010list}. Kir\'aly later asked whether $\chi_\ell(M_1,M_2)=\chi(M_1,M_2)$ for every pair of matroids~\cite{kiraly2013open}; the same question was considered by B\'erczi, Schwarcz, and Yamaguchi~\cite{berczi2021list} and was later restated as a conjecture by Aharoni, Berger, Guo, and Kotlar~\cite[Conjecture~5.4]{aharoni2025coloring}. For three matroids the equality was already known to fail: Kir\'aly gave an example of three partition matroids on six elements with common chromatic number two that are not list colorable from arbitrary two-element lists; see~\cite{berczi2021list}.

\subsection{Previous work}

For a single matroid, Seymour's theorem~\cite{seymour1998note} was strengthened by Laso\'n~\cite{lason2015list}, who allowed the prescribed list size to depend on the element, and by Laso\'n and Lubawski~\cite{lason2017online}, who proved the corresponding on-line statement.

For intersections, Kir\'aly and Pap~\cite{kiraly2010list} proved the joint list-coloring property for two transversal matroids, for two rank-two matroids, and in an arborescence setting. Aharoni and Berger~\cite{aharoni2006intersection} proved an ordinary $(p+q)$-coloring bound when the two matroids are respectively $p$- and $q$-colorable and $p$ divides $q$; Berger and Guo~\cite{berger2025coloring} removed the divisibility assumption and proved the stronger $(p+q)$-list-coloring statement. B\'erczi, Schwarcz, and Yamaguchi~\cite{berczi2021list} developed reductions to partition matroids and obtained list-coloring bounds for transversal, graphic, paving, and gammoid matroids. Aharoni, Berger, Guo, and Kotlar~\cite{aharoni2025coloring} proved in particular that $\chi_\ell(M_1,M_2)\le2\chi(M_1,M_2)$ for every pair of matroids.

\subsection{Our results}

Our main result shows that the common chromatic and common list chromatic numbers of two matroids need not be equal.

\begin{thm}\label{thm:counterintro}
There exist a graphic matroid $M_1$ and a partition matroid $M_2$ on the same ground set such that $\chi(M_1,M_2)=2$ and $\chi_\ell(M_1,M_2)=3$.
\end{thm}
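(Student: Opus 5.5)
The plan is to take $M_2$ to be a unit-capacity partition matroid whose blocks all have size two. A common coloring is then a coloring of the edges of a multigraph $G$ (so $M_1=M(G)$) in which the two edges of each block get different colors and every color class is a forest. Call two edges of a block \emph{partners}. For $\chi(M_1,M_2)=2$ we need $E(G)$ to split into two spanning forests with each block split between them. The easiest way to guarantee this is to build $G$ as the union of two explicit forests $F_1,F_2$ and to pair each edge of $F_1$ with an edge of $F_2$. The natural starting gadget is a graph $G$ of arboricity exactly two, for instance a multigraph in which some vertex sets span exactly $2(|X|-1)$ edges. In such a set, any 2-coloring must be a decomposition into two spanning trees, which gives strong propagation of colors.

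For the lower bound $\chi_\ell\ge 3$ I would mimic Kir\'aly's three-partition-matroid example, replacing the third matroid by the cycle constraints of $M_1$. Take a dense "tight" subgraph, say $2K_3$ or $K_4$ plus a doubled edge, with $|E|=2(|V|-1)$. With colors from lists of size two, every color class must be a spanning tree of the tight part. Assign lists along a chain of blocks so that the pairing constraints force a cyclic rotation of colors: choosing a color for one edge forces its partner, and the partner's list forces the next edge. After going around the chain, a color is forced to appear on $|V|$ edges of a tight subgraph, i.e.\ on a cycle. The key steps are:
\begin{enumerate}
\item fix the gadget and the pairing;
\item exhibit an explicit 2-coloring showing $\chi=2$;
\item exhibit a list assignment with lists drawn from three colors $\{a,b,c\}$;
\item do a short case analysis on the color of one distinguished edge, showing every branch produces either a monochromatic block or a monochromatic cycle.
\end{enumerate}
I expect to tune the construction so that this case analysis has only two or three branches, using the symmetry of the lists.

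For the upper bound $\chi_\ell(M_1,M_2)\le 3$ I would not rely on a general theorem, since the results quoted above only give $4$ (Berger--Guo with $p=q=2$, or $\chi_\ell\le 2\chi$). Instead I would argue directly for this specific pair. Given lists of size three, first pick, for each block $\{e,f\}$, one element $e$, and note that $f$ then retains at least two admissible colors. Then apply Laso\'n's variable-list-size strengthening of Seymour's theorem to $M_1$, with list sizes tuned via the forest decomposition $F_1,F_2$, so that the resulting coloring is proper for $M_1$ and distinct on partners. If this reduction is awkward, the fallback is that the constructed ground set is small. In that case $3$-list-colorability can be verified by a structured case analysis: delete a block, list-color the rest inductively, and extend, using that the remaining graph after deleting a block has arboricity at most two and enough slack.

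The main obstacle is the lower-bound construction itself. It must be tight enough that 2-colorings are rigid spanning-tree decompositions, which is what makes list propagation force a contradiction. At the same time it must be loose enough that an honest 2-coloring still exists and that size-three lists always succeed. I would first try the smallest tight multigraphs, such as $K_4$ with two doubled edges, and search over pairings until the propagation argument closes.
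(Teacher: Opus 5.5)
\textbf{The gap is in the lower bound.} The theorem is an existence statement, and your proposal never exhibits $M_1$, the pairing, or the bad list assignment. Steps 1, 3 and 4 are deferred to ``tuning'' and ``searching over pairings,'' which is a plan rather than a proof.

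The mechanism you sketch also cannot work as described. The rigidity ``every color class is a spanning tree of the tight part'' holds only if all edges of that part carry the \emph{same} two-element list. If the whole gadget uses one list $\{a,b\}$, an $L$-coloring is just an ordinary $2$-coloring, which exists because $\chi(M_1,M_2)=2$. If instead you rotate three colors through the lists inside a single tight part, three forests are available there and the spanning-tree rigidity is lost. Moreover, the gadgets you name are not tight. The doubled triangle has $6>4$ edges on $3$ vertices, and $K_4$ with one or two doubled edges has $7$ or $8>6$ edges on $4$ vertices. So their cycle matroids are not even $2$-colorable.

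The missing idea is to use \emph{several} tight components, each with a constant but different list, and to let the blocks of $M_2$ run between them. The paper takes three disjoint copies of $K_4$ with lists $\{1,2\}$, $\{2,3\}$, $\{1,3\}$, so each copy must split into two spanning trees. It then proves that such a split of $K_4$ separates exactly one pair of opposite edges. Making the opposite pair $\{a_i^+,a_i^-\}$ a block therefore forces the $b_i$- and $c_i$-pairs to be monochromatic with distinct colors. The cross blocks $\{b_i^+,c_{i+1}^-\}$ and $\{c_i^+,b_{i+1}^-\}$ (indices mod $3$) then make a two-case analysis on the color of the $b_1$-pair collapse. Meanwhile $X=\bigcup_i\{a_i^+,b_i^+,b_i^-\}$ and $Y=\bigcup_i\{a_i^-,c_i^+,c_i^-\}$ give an honest $2$-coloring.

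\textbf{Your upper-bound route is sound once made explicit, and it differs from the paper's.} For each block $\{e,f\}$ with $e\in F_1$ and $f\in F_2$, fix any color in $L(e)$ for $e$, set $L'(e)$ to be that singleton, and let $L'(f)$ be $L(f)$ minus that color, so $|L'(f)|\ge 2$. The coloring $F_1\mapsto 1$, $F_2\mapsto 2$ shows that $M_1$ is colorable from the lists $\{1,\dots,g(e)\}$ with $g=1$ on $F_1$ and $g=2$ on $F_2$. Laso\'n's theorem then gives an $L'$-coloring of $M_1$, in which partners receive different colors. (This form of the theorem also follows from Seymour's matroid-union argument after uncrossing the sets $\{e:\gamma\in L'(e)\}$.)

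This argument works for any matroid $M_1$ paired with a unit-capacity partition matroid whenever $\chi(M_1,M_2)\le 2$. The paper instead uses the graphic-specific reduction of B\'erczi, Schwarcz and Yamaguchi to a partition matroid with classes of size at most $3$, followed by Galvin's theorem on a bipartite multigraph of maximum degree $3$. Your inductive fallback is unnecessary, and as stated it has no guaranteed slack: all three colors of an edge may already be blocked by monochromatic paths between its endpoints.
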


This gives a counterexample to the conjecture mentioned above. The construction is rather simple: it consists of three copies of $K_4$ in the graphic matroid and a partition of their $18$ edges into pairs in the second matroid. The non-$2$-list-colorability has a direct proof; the upper bound $\chi_\ell(M_1,M_2)\le3$ follows from the partition-matroid reduction for graphic matroids in~\cite{berczi2021list} and Galvin's theorem.

We also show that if $M_1$ and $M_2$ are strongly base-orderable and $\chi(M_1),\chi(M_2)\le2$, then $\chi_\ell(M_1,M_2)\le2$. By a theorem of Davies and McDiarmid, the hypothesis is equivalent in this class to $\chi(M_1,M_2)\le2$.

\subsection{Preliminaries}
\label{sec:preliminaries}

All matroids in the paper are finite. We use standard matroid notation; see for example~\cite{schrijver2003combinatorial}. A \textit{ unit-capacity partition matroid} is specified by a partition $E=P_1\cup\dots\cup P_m$; its independent sets are the subsets meeting each $P_i$ in at most one element.

If $M=(E,\cI)$ and $N=(E,\mathcal{J})$ are matroids on the same ground set with $\mathcal{J}\subseteq\cI$, we say that $N$ is a \textit{reduction} of $M$. Thus every independent set of $N$ is independent in $M$. Reductions to partition matroids were used in~\cite{berczi2021list} to obtain bounds on list coloring of matroid intersections.

We also recall strong base orderability. A matroid $M$ is \textit{strongly base-orderable} if for every pair of bases $B_1,B_2$ there is a bijection $\varphi\colon B_1\to B_2$ such that $(B_1\setminus X)\cup\varphi(X)$ is a basis for every $X\subseteq B_1$. Strongly base-orderable matroids include all transversal matroids, and hence all partition matroids, and the class is closed under taking minors; see~\cite[Section~42.6c]{schrijver2003combinatorial}. Davies and McDiarmid proved that if $M_1$ and $M_2$ are strongly base-orderable, then $\chi(M_1,M_2)=\max\{\chi(M_1),\chi(M_2)\}$~\cite{davies1976disjoint}.

\section{List coloring of two matroids}
\label{sec:intersection}

\subsection{Separation of chromatic and list chromatic numbers}
\label{sec:counterexample}

\begin{proof}[Proof of \cref{thm:counterintro}]
Let $M_1$ be the direct sum of three copies of the cycle matroid of $K_4$. Let $E_i$ denote the edge set of the $i$th copy. In each copy, the six edges are partitioned into the three pairs of opposite edges, which we denote by $\{a_i^+,a_i^-\}$, $\{b_i^+,b_i^-\}$, and $\{c_i^+,c_i^-\}$ for $i=1,2,3$. We use the following elementary fact about $K_4$.

\begin{cla}\label{cla:k4}
Two complementary $3$-edge sets of $K_4$ are both spanning trees if and only if exactly one pair of opposite edges is split between them.
\end{cla}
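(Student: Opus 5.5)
The proof rests on a count of opposite pairs followed by the observation that the only $3$-edge subsets of $K_4$ that are not spanning trees are the triangles. Let $S$ and $T=E(K_4)\setminus S$ be complementary with $|S|=|T|=3$.

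\textbf{Step 1 (counting split pairs).} Each of the three opposite pairs lies entirely in $S$, lies entirely in $T$, or is split. Since $|S|=3$, there are only two possibilities. If some pair lies entirely in $S$, then the remaining edge of $S$ comes from a second pair, which is therefore split, and the third pair lies entirely in $T$; so exactly one pair is split. Otherwise no pair lies entirely in $S$, and $|S|=3$ then forces $S$ to contain exactly one edge of each pair; so all three pairs are split. Hence the number of split pairs is either $1$ or $3$, and it suffices to show that both sets are spanning trees in the first case and not in the second.

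\textbf{Step 2 (non-trees are triangles).} A $3$-edge subset of $K_4$, which has $4$ vertices, is a spanning tree if and only if it is acyclic. The only cycle using at most three edges is a triangle. So a $3$-set fails to be a spanning tree exactly when it is a triangle, and then its complement is the star at the opposite vertex.

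\textbf{Step 3 (case of one split pair).} Here $S$ consists of a perfect matching (the full opposite pair) together with one further edge, and that edge joins the two matching edges. Thus $S$ is a path with three edges, hence a spanning tree. The set $T$ has the same form, so it is a spanning tree as well.

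\textbf{Step 4 (case of three split pairs).} Here $S$ contains no two opposite edges. Opposite edges are exactly the disjoint pairs of edges in $K_4$, so the three edges of $S$ pairwise share a vertex. Three pairwise intersecting edges either share a common vertex, giving a star, or do not, giving a triangle. The same holds for $T$. A star and its complement cannot both be stars, since the complement of a star is a triangle. Hence one of $S,T$ is a triangle, and by Step 2 it is not a spanning tree.

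Combining Steps 1, 3 and 4 proves the claim. The only point needing care is Step 4, namely that pairwise intersecting triples of edges of $K_4$ are exactly the stars and the triangles. This can also be confirmed by counting: there are $2^3=8$ such triples, and $K_4$ has $4$ stars and $4$ triangles.
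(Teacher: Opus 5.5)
Your proof is correct and follows essentially the same route as the paper. Both arguments use the cardinality count to show that either exactly one or all three opposite pairs are split; in the first case both parts are connected (you make this explicit as a $3$-edge path), and in the second each part contains one edge from every opposite pair, hence is a star or a triangle, so one of the two sets is a triangle. Your justification that these one-edge-per-pair sets are exactly the stars and triangles (via pairwise-intersecting edges, or the count $8=4+4$) supplies a step the paper states without proof.
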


\begin{proof}
If an opposite pair is contained in one part, then cardinality forces another opposite pair to be contained in the other part and the third pair to be split. Both parts are then connected and have three edges, hence are spanning trees. Conversely, suppose that no opposite pair is contained in either part. Then all three opposite pairs are split. A set obtained by choosing one edge from each opposite pair of $K_4$ is either a triangle or a star, and its complement is respectively a star or a triangle. Thus the two sets cannot both be spanning trees.
\end{proof}

Let $M_2$ be the unit-capacity partition matroid whose classes are
\[
\{a_1^+,a_1^-\},\ \{a_2^+,a_2^-\},\ \{a_3^+,a_3^-\},\ 
\{b_1^+,c_2^-\},\ \{b_2^+,c_3^-\},\ \{b_3^+,c_1^-\},\ 
\{c_1^+,b_2^-\},\ \{c_2^+,b_3^-\},\ \{c_3^+,b_1^-\}.
\]

\begin{figure}[t]
\centering
\begin{tikzpicture}[
    vertex/.style={circle,fill=black,inner sep=0pt,minimum size=4.8pt},
    aedge/.style={line width=1.15pt},
    bedge/.style={line width=1.15pt,dashed},
    cedge/.style={line width=1.15pt,densely dotted},
    edge label/.style={font=\small,fill=white,inner sep=1pt}
]
\foreach \i/\x/\lab in {1/0/{\{1,2\}},2/4.6/{\{2,3\}},3/9.2/{\{1,3\}}}{
    \coordinate (u\i) at (\x,1.9);
    \coordinate (v\i) at (\x+2.4,1.9);
    \coordinate (w\i) at (\x,0);
    \coordinate (z\i) at (\x+2.4,0);

    \draw[aedge] (u\i) -- node[edge label,above] {$a_\i^+$} (v\i);
    \draw[aedge] (w\i) -- node[edge label,below] {$a_\i^-$} (z\i);

    \draw[bedge] (u\i) -- node[edge label,left] {$b_\i^+$} (w\i);
    \draw[bedge] (v\i) -- node[edge label,right] {$b_\i^-$} (z\i);

    \draw[cedge] (u\i) -- (z\i);
    \draw[cedge] (v\i) -- (w\i);

    \node[edge label] at (\x+0.65,1.05) {$c_\i^+$};
    \node[edge label] at (\x+1.75,1.05) {$c_\i^-$};

    \node[vertex] at (u\i) {};
    \node[vertex] at (v\i) {};
    \node[vertex] at (w\i) {};
    \node[vertex] at (z\i) {};

    \node[font=\small] at (\x+1.2,-0.75) {$L(e)=\lab$ for $e\in E_\i$};
}
\end{tikzpicture}
\caption{The three copies $E_1,E_2,E_3$ of $K_4$ defining $M_1$. Solid, dashed, and dotted edges form the $a$-, $b$-, and $c$-pairs, respectively.}
\label{fig:construction}
\end{figure}

We first show that $\chi(M_1,M_2)=2$. Let
\[
X=\bigcup_{i=1}^3\{a_i^+,b_i^+,b_i^-\}\quad\text{and}\quad Y=\bigcup_{i=1}^3\{a_i^-,c_i^+,c_i^-\}.
\]
In each copy of $K_4$, the three edges of $X$ form a spanning tree, and the same is true for $Y$. Hence $X$ and $Y$ are independent in $M_1$. Every class of $M_2$ has one element in $X$ and one in $Y$, so both sets are independent in $M_2$ as well. Thus $\chi(M_1,M_2)\le2$. Since the entire ground set is dependent in $M_1$, equality holds.

We next give a two-element list assignment $L$ for which no $L$-coloring of $(M_1,M_2)$ exists. Give every edge in the first, second, and third copies of $K_4$ the lists $\{1,2\}$, $\{2,3\}$, and $\{1,3\}$, respectively; see \cref{fig:construction}. Suppose that an $L$-coloring exists. In each copy of $K_4$, only two colors are available. Each color class is a forest and therefore contains at most three edges. Since all six edges are colored, each of the two color classes has exactly three edges and hence is a spanning tree of that copy of $K_4$. By \cref{cla:k4}, exactly one of the three opposite pairs is bichromatic.

The pair $\{a_i^+,a_i^-\}$ is a class of $M_2$, so its two edges receive different colors. It is therefore the unique bichromatic opposite pair in the $i$th copy of $K_4$. Consequently, the $b_i$-pair and the $c_i$-pair are both monochromatic and receive different colors.

In the first copy, the $b_1$-pair has color $1$ or $2$. Suppose first that it has color $1$. Then the $c_1$-pair has color $2$. Since $\{c_3^+,b_1^-\}$ is a class of $M_2$, the $c_3$-pair must have color $3$. The $b_2$-pair cannot have color $2$, since $\{c_1^+,b_2^-\}$ is a class of $M_2$, and it cannot have color $3$, since $\{b_2^+,c_3^-\}$ is a class of $M_2$. This is impossible, as the only available colors in the second copy are $2$ and $3$.

Suppose instead that the $b_1$-pair has color $2$. Then the $c_1$-pair has color $1$. Since $\{b_3^+,c_1^-\}$ is a class of $M_2$, the $b_3$-pair must have color $3$. The $c_2$-pair cannot have color $2$, since $\{b_1^+,c_2^-\}$ is a class of $M_2$, and it cannot have color $3$, since $\{c_2^+,b_3^-\}$ is a class of $M_2$. This is again impossible. Hence $\chi_\ell(M_1,M_2)\ge3$.

The above argument already shows that $\chi_\ell(M_1,M_2)>\chi(M_1,M_2)$. For completeness, we show that the list chromatic number is exactly $3$. The graphic matroid $M_1$ is $2$-colorable. By the partition-matroid reduction theorem of B\'erczi, Schwarcz, and Yamaguchi~\cite[Theorem~1.6]{berczi2021list}, there is a unit-capacity partition matroid $N_1$ on the same ground set such that every $N_1$-independent set is $M_1$-independent and $\chi(N_1)\le2\chi(M_1)-1=3$. The matroid $M_2$ is already a unit-capacity partition matroid and is $2$-colorable. The common independent sets of $N_1$ and $M_2$ are the matchings of a bipartite multigraph whose two vertex classes correspond to the partition classes of $N_1$ and $M_2$. Since every class of $N_1$ has size at most $3$ and every class of $M_2$ has size at most $2$, this multigraph has maximum degree at most $3$. By Galvin's theorem~\cite{galvin1995list}, it has a proper list edge-coloring from arbitrary lists of size $3$. Each color class is independent in both $N_1$ and $M_2$, and hence also in both $M_1$ and $M_2$. Therefore $\chi_\ell(M_1,M_2)\le3$.
\end{proof}

\subsection{Two strongly base-orderable matroids}
\label{sec:sbo}

The example above does not settle the strongly base-orderable case. Indeed, $M_1$ has $M(K_4)$ as a minor, while $M(K_4)$ is not base-orderable~\cite{bonin2016infinite}; since strongly base-orderable matroids are minor-closed and in particular base-orderable, $M_1$ is not strongly base-orderable. When two colors suffice, the partition-matroid reduction viewpoint gives a positive result.

\begin{prop}\label{prop:reduction}
Let $M=(E,\cI)$ be a strongly base-orderable matroid, and suppose that $E=I\cup J$ for two disjoint independent sets $I$ and $J$. Then $M$ has a reduction to a unit-capacity partition matroid whose classes have size at most two.
\end{prop}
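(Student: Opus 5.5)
Extend $I$ and $J$ to bases and use the strong base-ordering bijection to pair elements of $I$ with elements of $J$. The pairs, together with singletons for unmatched elements, will be the classes of the partition matroid.

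\emph{Step 1: extend to bases and pair.} Let $r$ be the rank of $M$. Since $E=I\cup J$ and $I$ is independent, we can extend $I$ to a basis $B_1=I\cup J'$ with $J'\subseteq J$. Similarly, extend $J$ to a basis $B_2=J\cup I'$ with $I'\subseteq I$. Strong base orderability gives a bijection $\varphi\colon B_1\to B_2$ such that $(B_1\setminus X)\cup\varphi(X)$ is a basis for every $X\subseteq B_1$. We may assume $\varphi$ fixes $B_1\cap B_2\supseteq I'\cup J'$. This is the standard fact that the bijection can be taken to be the identity on the intersection, obtained by contracting $B_1\cap B_2$ and using minor-closedness. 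Then $\varphi$ restricts to a bijection $I\setminus I'\to J\setminus J'$.

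\emph{Step 2: define the partition.} The classes are the pairs $\{e,\varphi(e)\}$ for $e\in I\setminus I'$, together with singletons $\{e\}$ for $e\in I'\cup J'$. These classes partition $E$: the set $I\setminus I'$ is matched to $J\setminus J'$, and $I'\cup J'$ covers the remaining elements, since $I'\subseteq I$ and $J'\subseteq J$ are disjoint from $J\setminus J'$ and $I\setminus I'$ respectively. Every class has size at most two.

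\emph{Step 3: check the reduction.} Let $T$ be a transversal, that is, a set meeting each class in at most one element. It suffices to treat a full transversal $T$, which contains all of $I'\cup J'$ and one element from each pair. Put $X=\{e\in I\setminus I': \varphi(e)\in T\}$. Then
\[
T=(I'\cup J')\cup \bigl((I\setminus I')\setminus X\bigr)\cup\varphi(X)=(B_1\setminus X)\cup\varphi(X),
\]
which is a basis by the strong base-ordering property. Hence every independent set of the partition matroid is independent in $M$.

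\emph{Main obstacle.} The delicate point is Step 1's requirement that $\varphi$ fix the common elements $I'\cup J'$. Without it, a singleton class could be mapped into a pair and the description of $T$ would fail. To justify it, contract $C=B_1\cap B_2$. Then $B_1\setminus C$ and $B_2\setminus C$ are bases of $M/C$, which is strongly base-orderable by minor-closedness. Take a bijection $\psi$ for them and extend it by the identity on $C$. For any $X$, the set $(B_1\setminus X)\cup\varphi(X)$ equals $C$ together with a basis of $M/C$, so it is a basis of $M$.
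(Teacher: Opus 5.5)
Your proof is correct and follows the paper's argument. You extend $I$ and $J$ to bases, pair elements via a strong base-ordering bijection obtained in the contraction by $B_1\cap B_2$, and show that every transversal lies in a basis of the form $(B_1\setminus X)\cup\varphi(X)$. The paper applies strong base orderability directly to the disjoint bases $I\setminus A$ and $J\setminus C$ of $M/(A\cup C)$, while you first choose $\varphi\colon B_1\to B_2$ fixing the intersection and justify it by the same contraction, which is only a difference in presentation.
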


\begin{proof}
Extend $I$ to a basis $B_1=I\cup C$ with $C\subseteq J$, and extend $J$ to a basis $B_2=J\cup A$ with $A\subseteq I$. Then $B_1\cap B_2=A\cup C$, $B_1\setminus B_2=I\setminus A$, and $B_2\setminus B_1=J\setminus C$.

Apply strong base orderability in the contraction $M/(A\cup C)$ to the disjoint bases $I\setminus A$ and $J\setminus C$. There is a bijection $\varphi\colon I\setminus A\to J\setminus C$ such that $((I\setminus A)\setminus X)\cup\varphi(X)$ is a basis of $M/(A\cup C)$ for every $X\subseteq I\setminus A$. Let $N$ be the unit-capacity partition matroid whose classes are the pairs $\{x,\varphi(x)\}$ for $x\in I\setminus A$, together with the singletons $\{e\}$ for $e\in A\cup C$.

To see that $N$ is a reduction of $M$, let $T$ be independent in $N$, and let $X$ be the set of those $x\in I\setminus A$ for which $\varphi(x)\in T$. Then $T\subseteq ((I\setminus A)\setminus X)\cup\varphi(X)\cup A\cup C$. The first two terms form a basis of $M/(A\cup C)$, while $A\cup C$ is independent in $M$, so the set on the right is a basis of $M$. Hence $T$ is independent in $M$.
\end{proof}

\begin{cor}\label{cor:sbo}
Let $M_1$ and $M_2$ be strongly base-orderable matroids on a common ground set. If $\chi(M_1)\le2$ and $\chi(M_2)\le2$, then $\chi_\ell(M_1,M_2)\le2$. In particular, if $\chi(M_1,M_2)=2$, then $\chi_\ell(M_1,M_2)=2$.
\end{cor}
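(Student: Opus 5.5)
The plan is to reduce both matroids to unit-capacity partition matroids with classes of size at most two, using \cref{prop:reduction}, and then apply Galvin's theorem. Since $\chi(M_i)\le2$ and $M_i$ is loopless (implicit in the coloring setting), we can write $E=I_i\cup J_i$ with $I_i,J_i$ disjoint and independent in $M_i$. If $\chi(M_i)=1$, we take $J_i=\emptyset$; the proposition still applies, or we may simply use the free partition matroid with singleton classes. By \cref{prop:reduction}, there is a unit-capacity partition matroid $N_i$ with $\mathcal{I}(N_i)\subseteq\mathcal{I}(M_i)$ whose classes have size at most two, for $i=1,2$.

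Next, form the bipartite multigraph $G$ whose vertex classes are the partition classes of $N_1$ and of $N_2$. Each element $e\in E$ becomes an edge joining the $N_1$-class containing $e$ to the $N_2$-class containing $e$. The common independent sets of $N_1$ and $N_2$ are exactly the matchings of $G$, and $G$ has maximum degree at most $2$ because all classes have size at most two. Galvin's theorem~\cite{galvin1995list} says that the list chromatic index of a bipartite multigraph equals its maximum degree. So for any list assignment $L$ with $|L(e)|=2$, there is a proper $L$-edge-coloring of $G$. Each color class is then a matching, hence independent in $N_1$ and $N_2$, hence independent in $M_1$ and $M_2$ since each $N_i$ is a reduction of $M_i$. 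This gives $\chi_\ell(M_1,M_2)\le2$.

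For the second sentence of the corollary, suppose $\chi(M_1,M_2)=2$. Any common coloring is a coloring of each $M_i$, so $\chi(M_i)\le2$. The first part then gives $\chi_\ell(M_1,M_2)\le2$. For the reverse inequality, note that $\chi_\ell\ge\chi$ always holds: take all lists equal to $\{1,\dots,k\}$. Hence $\chi_\ell(M_1,M_2)\ge2$ and equality follows. The Davies--McDiarmid theorem is not needed for this direction.

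There is essentially no obstacle here, since \cref{prop:reduction} does all the matroid work. The only points needing care are the degenerate cases. If some $\chi(M_i)=1$, the reduction is trivial (singleton classes). If the maximum degree of $G$ is below $2$, Galvin's bound only gets easier, because lists of size $2$ are at least the maximum degree.
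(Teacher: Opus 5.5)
Your proof is correct and follows the paper's argument: reduce each $M_i$ via \cref{prop:reduction} to a unit-capacity partition matroid with classes of size at most two, then apply Galvin's theorem to the resulting bipartite multigraph of maximum degree at most two. You handle the ``in particular'' clause more economically than the paper, which cites Davies--McDiarmid there, and you are right that it is not needed: the clause only requires the trivial implication $\chi(M_1,M_2)=2\Rightarrow\chi(M_i)\le2$ together with $\chi_\ell\ge\chi$, while Davies--McDiarmid only serves to make the two hypotheses equivalent.
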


\begin{proof}
Applying \cref{prop:reduction} to each matroid gives unit-capacity partition matroids $N_1$ and $N_2$ such that every $N_j$-independent set is $M_j$-independent and every partition class of $N_j$ has size at most two. The common independent sets of $N_1$ and $N_2$ are the matchings of a bipartite multigraph of maximum degree at most two. By Galvin's theorem~\cite{galvin1995list}, this multigraph has a proper list edge-coloring from arbitrary lists of size two. The resulting color classes are independent in both $M_1$ and $M_2$, so $\chi_\ell(M_1,M_2)\le2$. The equivalent formulation follows from the theorem of Davies and McDiarmid mentioned in \cref{sec:preliminaries}.
\end{proof}

\section{Concluding questions}
\label{sec:open}

The equality $\chi_\ell=\chi$ fails for two matroids, so it is natural to ask how large the additive gap can be. Aharoni, Berger, Guo, and Kotlar~\cite{aharoni2025coloring} proved $\chi_\ell(M_1,M_2)\le2\chi(M_1,M_2)$ for every pair of matroids, while \cref{thm:counterintro} shows that an additive gap of one is possible.

\begin{qu}\label{qu:gap}
Is there a constant $C$ such that $\chi_\ell(M_1,M_2)\le \chi(M_1,M_2)+C$ for every pair of matroids $M_1,M_2$? In particular, does $\chi_\ell(M_1,M_2)\le \chi(M_1,M_2)+1$ always hold?
\end{qu}

When one matroid is graphic and the other is a partition matroid, the reduction theorem of B\'erczi, Schwarcz, and Yamaguchi~\cite[Theorem~1.6]{berczi2021list} and Galvin's theorem give
\[
\chi_\ell(M_1,M_2)\le \max\{2\chi(M_1)-1,\chi(M_2)\}\le2\chi(M_1,M_2)-1.
\]
For $\chi(M_1,M_2)=2$, this bound equals $3$ and is attained by the example in \cref{thm:counterintro}. Thus the bound is tight when the common chromatic number is two; for $\chi(M_1,M_2)\ge3$, it remains open whether it can be improved.

The strongly base-orderable setting remains open beyond two colors. Davies and McDiarmid's theorem determines the common chromatic number in this class~\cite{davies1976disjoint}, and \cref{cor:sbo} gives the list analogue when that number is two. Kir\'aly and Pap~\cite{kiraly2010list} proved equality for the subclass of transversal matroids.

\begin{qu}\label{qu:sbo}
Does $\chi_\ell(M_1,M_2)=\chi(M_1,M_2)$ hold whenever $M_1$ and $M_2$ are strongly base-orderable?
\end{qu}

\bigskip

\paragraph{Acknowledgements.}
The author is grateful to Kristóf Bérczi and Tamás Fleiner for helpful discussions.

\paragraph{Use of generative AI.}
No AI assistance was used in obtaining the results of this paper.

\bibliographystyle{abbrv}
\bibliography{list_coloring}

\end{document}